\def\1{\raisebox{2pt}{\rm{$\chi$}}}
\newtheorem{theorem}{Theorem}[section]
\newtheorem{lemma}[theorem]{Lemma}
\newtheorem{proposition}[theorem]{Proposition}
\newtheorem{definition}[theorem]{Definition}
\newtheorem{remark}[theorem]{Remark}
\newcommand{\R}{{\mathbb R}}
\def\1{\raisebox{2pt}{\rm{$\chi$}}}
\def\vint_#1{\mathchoice%
         {\mathop{\kern 0.2em\vrule width 0.6em height 0.69678ex depth -0.58065ex
                 \kern -0.8em \intop}\nolimits_{\kern -0.4em#1}}%
         {\mathop{\kern 0.1em\vrule width 0.5em height 0.69678ex depth -0.60387ex
                 \kern -0.6em \intop}\nolimits_{#1}}%
         {\mathop{\kern 0.1em\vrule width 0.5em height 0.69678ex
             depth -0.60387ex
                 \kern -0.6em \intop}\nolimits_{#1}}%
         {\mathop{\kern 0.1em\vrule width 0.5em height 0.69678ex depth -0.60387ex
                 \kern -0.6em \intop}\nolimits_{#1}}}
\def\vintslides_#1{\mathchoice%
         {\mathop{\kern 0.1em\vrule width 0.5em height 0.697ex depth -0.581ex
                 \kern -0.6em \intop}\nolimits_{\kern -0.4em#1}}%
         {\mathop{\kern 0.1em\vrule width 0.3em height 0.697ex depth -0.604ex
                 \kern -0.4em \intop}\nolimits_{#1}}%
         {\mathop{\kern 0.1em\vrule width 0.3em height 0.697ex depth -0.604ex
                 \kern -0.4em \intop}\nolimits_{#1}}%
         {\mathop{\kern 0.1em\vrule width 0.3em height 0.697ex depth -0.604ex
                 \kern -0.4em \intop}\nolimits_{#1}}}
\newcommand{\intav}{\vint}
\newcommand{\aveint}[2]{\mathchoice%
         {\mathop{\kern 0.2em\vrule width 0.6em height 0.69678ex depth -0.58065ex
                 \kern -0.8em \intop}\nolimits_{\kern -0.45em#1}^{#2}}%
         {\mathop{\kern 0.1em\vrule width 0.5em height 0.69678ex depth -0.60387ex
                 \kern -0.6em \intop}\nolimits_{#1}^{#2}}%
         {\mathop{\kern 0.1em\vrule width 0.5em height 0.69678ex depth -0.60387ex
                 \kern -0.6em \intop}\nolimits_{#1}^{#2}}%
         {\mathop{\kern 0.1em\vrule width 0.5em height 0.69678ex depth -0.60387ex
                 \kern -0.6em \intop}\nolimits_{#1}^{#2}}}
\newcommand{\dist}{\operatorname{dist}}
\title[Local maximal operators]{Local maximal operators on fractional Sobolev spaces}
\author[H.\! Luiro]{Hannes Luiro}   
\address[H.L.]{Department of Mathematics and Statistics, P.O. Box 35, FI-40014 University of Jyv\"askyl\"a, Finland}
\email{hannes.s.luiro@jyu.fi}
\author{Antti V. V\"ah\"akangas}
\address[A.V.V]{Department of Mathematics and Statistics, 
Gustaf H\"allstr\"omin katu 2B, FI-00014 University of Helsinki, Finland}
\email{antti.vahakangas@helsinki.fi}
\date{\today}
\begin{document}

\keywords{Local maximal operator, fractional Sobolev space, Hardy inequality}
\subjclass[2010]{42B25, 46E35, 47H99}

\begin{abstract}
In this note we establish the boundedness properties of local maximal operators $M_G$
on the fractional Sobolev spaces $W^{s,p}(G)$  whenever $G$ is an open set in $\R^n$, $0<s<1$ and $1<p<\infty$.
As  an application, we characterize the fractional $(s,p)$-Hardy
inequality on a bounded open set $G$ by a  Maz'ya-type testing condition
localized to Whitney cubes.
\end{abstract}

\maketitle

\markboth{\textsc{H. Luiro  and A. V. V\"ah\"akangas}}
{\textsc{Local maximal operators}}

\section{Introduction}

The
local Hardy--Littlewood maximal operator $M_G=f\mapsto M_G f$ is defined for an open set 
$\emptyset\not=G\subsetneq\R^n$ and a function $f\in L^1_{\textup{loc}}(G)$ by
\[
M_G f(x)=\sup_{r}\intav_{B(x,r)}\lvert f(y)\rvert\,dy\,,\qquad x\in G\,,
\]
where the supremum ranges over $0<r<\dist(x,\partial G)$.
Whereas the (local) Hardy--Littlewood maximal operator is often used to estimate the absolute size, its
Sobolev mapping properties are perhaps less known.
The classical Sobolev regularity of $M_G$ is established by Kinnunen and Lindqvist in \cite{MR1650343};
we also refer to \cite{MR2041705,MR1469106,MR1979008,MR1951818,MR2280193}.
Concerning smoothness of fractional order, the first author established in \cite{MR2579688} the boundedness and continuity properties of $M_G$ 
on the Triebel--Lizorkin
spaces $F^{s}_{pq}(G)$ whenever $G$ is an open set in $\R^n$,  $0<s<1$ and $1<p,q<\infty$. 

Our main focus lies in the mapping properties of $M_G$ on a fractional Sobolev space $W^{s,p}(G)$
with $0<s<1$ and $1<p<\infty$,
cf. Section \ref{s.notation} for the definition or \cite{MR2944369} for a survey of this space.
The intrinsically defined function space $W^{s,p}(G)$ on a given domain $G$  coincides with the trace space $F^{s}_{pp}(G)$ if and only if $G$ is regular, i.e., 
\[
\lvert B(x,r)\cap G\rvert \simeq r^n
\]
whenever $x\in G$ and $0<r<1$,  see \cite[Theorem 1.1]{Z} and \cite[pp. 6--7]{MR1163193}. 
As a consequence, 
if $G$ is a regular domain then $M_G$ is bounded on $W^{s,p}(G)$.
Moreover, the following question arises: is
$M_G$ a bounded operator on $W^{s,p}(G)$ even if $G$ is not regular, e.g., if $G$ has an exterior cusp ?
Our main result provides an affirmative answer to the last question:

\begin{theorem}\label{t.m.bounded}
Let $\emptyset\not=G\subsetneq \R^n$ be an open set, $0<s < 1$ and $1<p < \infty$. Then, there is a constant
$C=C(n,p,s)>0$ such that inequality
\begin{equation}\label{e.max_bdd}
\int_G \int_G \frac{\lvert M_G f(x)-M_G f(y)\rvert^p}{\lvert x-y\rvert^{n+sp}}\,dy\,dx
\le C \int_G \int_G \frac{ \lvert f(x)-f(y)\rvert^p}{\lvert x-y\rvert^{n+sp}}\,dy\,dx
\end{equation}
holds for every $f\in L^p(G)$. In particular, the local Hardy--Littlewood maximal operator $M_G$ is bounded
on the fractional Sobolev space $W^{s,p}(G)$.
\end{theorem}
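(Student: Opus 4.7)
The strategy is modelled on the (essentially straightforward) proof that the Hardy--Littlewood maximal operator $M$ is bounded on $W^{s,p}(\R^n)$: one establishes the pointwise estimate
\[
\lvert Mf(x) - Mf(y)\rvert \leq M(g_{y-x})(x), \qquad g_v(z) := \lvert f(z) - f(z+v)\rvert,
\]
raises it to the $p$-th power, integrates, changes variables $v := y-x$, and invokes Fubini together with the $L^p$-boundedness of $M$. The substantive obstacle in the local setting is that the families of admissible radii at $x$ and $y$ differ: in the definition of $M_G$ they are constrained by the (possibly very different) distances $d_x := \dist(x,\partial G)$ and $d_y := \dist(y,\partial G)$.

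For each $x \in G$ one selects a radius $r_x \in [0, d_x)$ that nearly realizes the supremum in $M_G f(x)$; such a choice is standard given $f \in L^p(G)$. Fixing $x, y \in G$ with $M_G f(x) \geq M_G f(y)$, first suppose $r_x < d_y$, so that $B(y, r_x) \subset G$. Then $\intav_{B(y, r_x)}\lvert f\rvert\,dz$ is admissible for $M_G f(y)$, and a change of variables gives
\[
M_G f(x) - M_G f(y) \leq \intav_{B(x, r_x)}\bigl(\lvert f(z)\rvert - \lvert f(z+y-x)\rvert\bigr)\,dz \leq M_G g_{y-x}(x),
\]
where $g_v$ is extended by zero outside $\{z \in G : z+v \in G\}$.

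The delicate case is the complementary one, $r_x \geq d_y$, arising when $y$ lies substantially closer to $\partial G$ than $x$ so that the optimizing scale at $x$ exceeds the entire admissible range at $y$. Here the plan is to truncate $r_x$ down to a scale comparable to $d_y$ and to decompose $G \times G$ according to the relative sizes of $\lvert x-y\rvert$, $d_x$, and $d_y$---for which a Whitney-type dyadic decomposition of $G$ is the natural tool---in order to reduce the bound either to the previous translation-type estimate or to a quantity controlled directly by the Gagliardo seminorm of $f$.

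Once the pointwise bound $\lvert M_G f(x) - M_G f(y)\rvert \lesssim M_G g_{y-x}(x) + M_G g_{x-y}(y)$ is established on all of $G \times G$, the theorem follows by raising to the $p$-th power, integrating, applying the change of variables $v := y-x$, and invoking Fubini and the $L^p$-boundedness of $M_G$:
\[
\iint_{G\times G} \frac{\lvert M_G f(x) - M_G f(y)\rvert^p}{\lvert x-y\rvert^{n+sp}}\,dy\,dx \lesssim \int \frac{\lVert g_v\rVert_{L^p}^p}{\lvert v\rvert^{n+sp}}\,dv,
\]
and the right-hand side unwinds, by Fubini once more, to a constant multiple of the seminorm on the right of \eqref{e.max_bdd}. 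The principal difficulty is precisely the regime $r_x \geq d_y$: the unrestricted argument breaks exactly because of the constraint $r < d_x$ in the definition of $M_G$, and all the local-versus-global tension of the problem is concentrated there.
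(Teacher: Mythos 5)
Your outline correctly isolates where the difficulty lies, but it does not resolve it: the regime in which the same radius cannot be used at both points is precisely the content of the theorem, and for that regime you offer only a plan (``truncate $r_x$ down to a scale comparable to $d_y$ and decompose $G\times G$ via a Whitney-type decomposition''), not an argument. Two concrete problems. First, the target pointwise bound $\lvert M_Gf(x)-M_Gf(y)\rvert\lesssim M_Gg_{y-x}(x)+M_Gg_{x-y}(y)$ is never established in the hard case, and it is not of the right shape there: once the radii at $x$ and at $y$ must differ, the difference of the two averages is no longer a translation error, and comparing $\intav_{B(x,r_1)}\lvert f\rvert$ with $\intav_{B(y,r_2)}\lvert f\rvert$ for $r_2\neq r_1$ requires comparing $f$ at pairs of points whose separation varies over the ball --- something a single maximal function of the fixed-offset difference $g_{y-x}$ cannot record. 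Second, your case split is not the convenient one: inside the case $r_x\ge d_y$ there remain subcases with $r_x$ much larger than both $d_y$ and $\lvert x-y\rvert$ (take $x$ deep inside $G$ and $y$ near $\partial G$ with $\lvert x-y\rvert$ comparable to $d_x$), and truncating the radius at $y$ to scale $d_y$ produces two balls of incomparable size that no translation relates.

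The paper's proof avoids all of this with two devices you are missing. (i) In the case $r(x)>\lvert x-y\rvert$ it takes the competitor radius at $y$ to be $r_2=r(x)-\lvert x-y\rvert$; this is automatically admissible because $\dist(x,\partial G)\le\dist(y,\partial G)+\lvert x-y\rvert$, so no comparison of $r_x$ with $d_y$ is ever needed. (In the complementary case $r(x)\le\lvert x-y\rvert$ it takes $r_2=0$, producing the term $\intav_{B(x,r_1)}\lvert f(z)-f(y)\rvert\,dz$, which again is not a maximal function of $g_{y-x}$ but is handled by a directional maximal operator acting on a two-variable function.) (ii) To compare the averages over $B(x,r_1)$ and $B(y,r_2)$ it inserts an intermediate average of $f$ over the small balls $B(y+\tfrac{r_2}{r_1}z,2\lvert x-y\rvert)\cap G$ for $z\in B(0,r_1)$; this yields a \emph{composition} of two maximal operators, $M_{11}\circ M_{01}$, applied to $S(f)(x,y)=\chi_G(x)\chi_G(y)\lvert f(x)-f(y)\rvert/\lvert x-y\rvert^{n/p+s}$ on $\R^{2n}$, and the theorem follows from the $L^p(\R^{2n})$-boundedness of these compositions. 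No Whitney decomposition is used, and no geometric hypothesis on $G$ enters. Your closing integration scheme (freeze $v=y-x$, apply $L^p$-boundedness, Fubini back) is sound for the terms it applies to, but the central case cannot be forced into that single-maximal-function form, so the proof as proposed has a genuine gap exactly where you concede the difficulty is concentrated.
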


The relatively simple proof of Theorem \ref{t.m.bounded} is based on a pointwise inequality
in $\R^{2n}$, see  Proposition \ref{p.maximal}. 
That is, for  $f\in L^p(G)$ we  define an auxiliary function $S(f):\R^{2n}\to \R$
\[
S(f)(x,y)=\frac{ \chi_G(x)\chi_G(y) \lvert f(x)-f(y)\rvert}{\lvert x-y\rvert^{\frac{n}{p}+s}}\,,\qquad \text{a.e. }(x,y)\in \R^{2n}\,.
\]
Observe that the $L^p(\R^{2n})$-norm of $S(f)$ coincides with 
$\lvert f\rvert_{W^{s,p}(G)}$,
compare to  definition \eqref{e.semi}.
The key step is to show that $S(M_G f)(x,y)$ is pointwise almost everywhere dominated
by  \[C(n,p,s)\sum_{i,j,k,l\in \{0,1\}}\big(M_{ij}(M_{kl}(Sf))(x,y)+ M_{ij}(M_{kl}(Sf))(y,x)\big)\,,\]
where each  $M_{ij}$ and $M_{kl}$ is either $F\mapsto \lvert F\rvert$ or 
a $V$-directional maximal operator in $\R^{2n}$
that is defined in terms of a fixed $n$-dimensional subspace 
$V\subset \R^{2n}$,  we refer to Definition \eqref{e.lower_dim}. 
The geometry of the open set $G$ does
not have a pivotal role, hence, we are
able to prove the pointwise domination without imposing additional restrictions on $G$.
Theorem \ref{t.m.bounded}
is then a consequence of the fact that the compositions
$M_{ij}M_{kl}$
are bounded on $L^p(\R^{2n})$ if $1<p<\infty$.
The described transference of the problem to the $2n$-dimensional
Euclidean space is a typical step when dealing with norm estimates
for the spaces $W^{s,p}(G)$, we refer
to \cite{E-HSV,ihnatsyeva3,Z} for other  examples.
We plan to adapt the transference method to
norm estimates on
intrinsically defined Triebel--Lizorkin and Besov function spaces on open sets, \cite{MR1163193}. 

As an application of our main result, Theorem \ref{t.m.bounded}, we study
 fractional Hardy inequalities. Let us recall that
an open set $\emptyset\not=G\subsetneq\R^n$ admits an $(s,p)$-Hardy
inequality, for $0<s<1$ and $1<p<\infty$, if there exists a
constant $C>0$ such that inequality 
\begin{equation}\label{e.hardy}
\int_{G} \frac{\lvert f(x)\rvert^p}{\dist(x,\partial G)^{sp}}\,dx
\le   C \int_{G} \int_{G}
\frac{\lvert f(x)-f(y)\rvert ^p}{\lvert x-y\rvert ^{n+sp}}\,dy\,dx
\end{equation}
holds for
all functions $f\in C_c(G)$. 
These inequalities have attracted some interest recently, we
refer to \cite{Dyda3,Dyda2,E-HSV,ihnatsyeva3,ihnatsyeva2,ihnatsyeva1} and the references therein.

In Theorem \ref{t.second} we 
answer a question from \cite{Dyda3}, i.e., we characterize 
those bounded open sets which admit an $(s,p)$-Hardy inequality. The characterization
is given in terms of a localized Maz'ya-type testing condition, where
a lower bound  $\ell(Q)^{n-sp}\lesssim \mathrm{cap}_{s,p}(Q,G)$ for the fractional $(s,p)$-capacities  of all Whitney cubes $Q\in\mathcal{W}(G)$ is
required  and a  quasiadditivity
property of the same capacity is assumed with respect
to all finite families of Whitney cubes.
Aside from  inequality \eqref{e.max_bdd} an important ingredient in the proof of Theorem \ref{t.second} is
the estimate
\begin{equation}\label{e.harnack}
\intav_{2^{-1}Q} f\,dx \le C\, \inf_Q M_Gf\,,
\end{equation}
which holds for a constant  $C>0$ that is
independent of both $Q\in\mathcal{W}(G)$ and $f\in C_c(G)$.
Inequality~\eqref{e.harnack} allows us to circumvent the (apparently unknown) weak Harnack inequalities
for the minimizers that are associated with the $(s,p)$-capacities. The weak Harnack based approach is taken up 
in  \cite{MR3189220}; therein the counterpart of Theorem \ref{t.second} is obtained in case of the classical Hardy inequality, i.e., 
for the gradient instead of the fractional Sobolev seminorm.

The structure of this paper is as follows. In Section \ref{s.notation} we present
the notation and recall various maximal operators. The proof of Theorem \ref{t.m.bounded} is taken up in Section \ref{s.estimates}.
Finally, in Section \ref{s.application}, we give an application of our main result by characterizing
fractional $(s,p)$-Hardy inequalities on bounded open sets.

\section{Notation and preliminaries}\label{s.notation}

\subsection*{Notation}
The open ball centered at $x\in \R^n$ and with radius $r>0$ is  written as $B(x,r)$.
The Euclidean
distance from $x\in\R^n$ to a set $E$ in $\R^n$ is written as $\dist(x,E)$.
The Euclidean diameter of $E$  is $\mathrm{diam}(E)$.
The Lebesgue $n$-measure of a  measurable set $E$ is denoted by $\vert E\vert.$
The characteristic function of a set $E$ is written as $\chi_E$.
We write $f\in C_c(G)$ if $f:G\to \R$ is a continuous function with
 compact support in an open set $G$.
 We let $C(\star,\dotsb,\star)$  denote a positive constant which depends on the quantities appearing
in the parentheses only.

For an open set $\emptyset\not=G\subsetneq \R^n$ in $\R^n$, we let
$\mathcal{W}(G)$ be its Whitney decomposition. 
For the properties of Whitney cubes  we refer to 
\cite[VI.1]{MR0290095}. In particular, we need the inequalities
\begin{equation}\label{dist_est}
\mathrm{diam}(Q)\le \mathrm{dist}(Q,\partial G)\le 4 \mathrm{diam}(Q)\,,\quad Q\in \mathcal{W}(G)\,.
\end{equation}
The center of a cube $Q\in\mathcal{W}(G)$
is written as $x_Q$ and  $\ell(Q)$ is its side length.
By $tQ$, $t>0$, we mean
a cube whose sides are parallel to those of $Q$ and that is centered 
at $x_Q$ and whose
side length is $t\ell(Q)$.

 Let $G$ be an open set in $\R^n$. Let $1< p<\infty$ and $0<s<1$ be given. We write
\begin{equation}\label{e.semi}
\lvert f \rvert_{W^{s,p}(G)} = \bigg( \int_G\int_{G}\frac{\lvert f(x)-f(y)\rvert^p}{\lvert x-y\rvert^{n+s p}}\,
dy\,dx\,\bigg)^{1/p}
\end{equation}
for measurable functions $f$ on $G$ that are finite almost everywhere.
By
$W^{s,p}(G)$ we mean the fractional Sobolev space of functions $f$ in
$L^p({G})$ with
\[\lVert f\rVert_{W^{s,p}({G})}
=\lVert f\rVert_{L^p({G})}+|f|_{W^{s,p}({G})}<\infty\,.\]

\subsection*{Maximal operators}
Let $\emptyset\not=G\subsetneq \R^n$ be an open set.
The  local Hardy--Littlewood maximal function of $f\in L^1_{\textup{loc}}(G)$
is defined as follows. For every $x\in G$,  we write
\begin{equation}\label{d.max}
M_G f(x) = \sup_r \intav_{B(x,r)} \lvert f(y)\rvert\,dy\,,
\end{equation}
where the supremum ranges over $0< r<  \dist(x,\partial G)$.
 For notational convenience, we write
\begin{equation}\label{e.zero}
\intav_{B(x,0)} \lvert f(y)\rvert \,dy = \lvert f(x)\rvert
\end{equation}
whenever $x\in G$ is a Lebesgue point of $\lvert f\rvert$.
It is clear that, at the Lebesgue points of $\lvert f\rvert$, the
supremum in \eqref{d.max} can equivalently be taken over $0\le r\le \dist(x,\partial G)$.

The following lemma is from \cite[Lemma 2.3]{Dyda3}.

\begin{lemma}\label{l.continuity}
Let $\emptyset\not=G\subsetneq \R^n$ be an  open set and $f\in C_c(G)$. Then 
$M_G f$  
is continuous on $G$.
\end{lemma}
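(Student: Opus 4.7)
Set $d(x) = \dist(x,\partial G)$ and $F(x,r) = \intav_{B(x,r)}\lvert f(y)\rvert\,dy$ for $r>0$, with $F(x,0) = \lvert f(x)\rvert$ from \eqref{e.zero}. By the remark following \eqref{e.zero}, $M_G f(x) = \sup_{0 \le r \le d(x)} F(x,r)$. The plan is to verify that $F$ is jointly continuous in $(x,r)$ and then deduce continuity of $M_G f$ via a standard compactness argument, exploiting the $1$-Lipschitz continuity of $d$.

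Joint continuity of $F$ on $G \times [0,\infty)$ is routine: since $f \in C_c(G)$, $\lvert f\rvert$ is bounded and uniformly continuous on $\R^n$ after extension by zero. For fixed $r>0$, the volume $\lvert B(x,r)\rvert = c_n r^n$ is independent of $x$, and $\chi_{B(x_k,r_k)} \to \chi_{B(x,r)}$ almost everywhere whenever $(x_k,r_k)\to(x,r)$, so dominated convergence gives $F(x_k,r_k) \to F(x,r)$. Continuity at $r=0$ reduces to $\intav_{B(x',r')}\lvert f\rvert \to \lvert f(x)\rvert$ as $(x',r') \to (x,0)$, which is immediate from uniform continuity of $\lvert f\rvert$.

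Next I would fix $x_k \to x$ in $G$, noting $d(x_k) \to d(x)$. For upper semicontinuity, choose $r_k \in [0,d(x_k)]$ with $F(x_k,r_k) \ge M_G f(x_k) - 1/k$, extract a subsequence with $r_k \to r^* \in [0,d(x)]$, and invoke joint continuity: $\limsup_k M_G f(x_k) \le F(x,r^*) \le M_G f(x)$. For lower semicontinuity, continuity of $F(x,\cdot)$ on the compact interval $[0,d(x)]$ yields a maximizer $r^* \in [0,d(x)]$; setting $\rho_k := \min\{r^*,d(x_k)\} \in [0,d(x_k)]$ gives $\rho_k \to r^*$ and
\[
\liminf_k M_G f(x_k) \ge \lim_k F(x_k,\rho_k) = F(x,r^*) = M_G f(x).
\]

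The only delicate step I anticipate is the boundary case $r = d(x)$, where the admissible range of radii depends on the base point. Clipping the test radii to $d(x_k)$ before passing to the limit reduces this to compactness of $[0,d(x)]$ and the Lipschitz continuity of $d$, both of which are immediate.
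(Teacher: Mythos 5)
Your proof is correct. Note, however, that the paper itself offers no proof of this lemma: it is quoted verbatim from [Dyda--V\"ah\"akangas, Lemma 2.3] (reference \cite{Dyda3}), so there is no in-paper argument to compare against. Your argument is the natural self-contained one: extend $f$ by zero (continuously, since $\spt f$ is compact in $G$), establish joint continuity of $(x,r)\mapsto \intav_{B(x,r)}\lvert f\rvert$ on $G\times[0,\infty)$ including the degenerate radius $r=0$ via the convention \eqref{e.zero}, and then handle the $x$-dependent index set $[0,\dist(x,\partial G)]$ by combining compactness of the radius interval with the $1$-Lipschitz continuity of the distance function --- the clipping $\rho_k=\min\{r^*,d(x_k)\}$ is exactly the right device for the lower bound, and the extraction of a convergent subsequence of near-maximizing radii gives the upper bound (strictly speaking one should pass first to a subsequence realizing the $\limsup$ of $M_Gf(x_k)$, but this is routine). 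The only hypotheses you use are continuity and compact support of $f$ and openness of $G$, which is all the lemma assumes, so the proof is complete.
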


Let us fix $i,j\in \{0,1\}$ and $1<p<\infty$. For a function  $F\in L^p(\R^{2n})$ we write
\begin{equation}\label{e.lower_dim}
M_{ij}(F)(x,y) = \sup_{r>0}\intav_{B(0,r)} \lvert F(x+iz,y+jz)\rvert \,dz
\end{equation}
for almost every $(x,y)\in \R^{2n}$.
Observe that $M_{00}(F)=\lvert F\rvert$. 
By applying Fubini's theorem in suitable coordinates
and boundedness of the centred Hardy--Littlewood maximal operator in $L^p(\R^n)$ we find that $M_{ij}=F\mapsto M_{ij}(F)$ is 
a bounded operator on $L^p(\R^{2n})$; let us remark that the measurability
of $M_{ij}(F)$ for a given $F\in L^p(\R^{2n})$ can be checked by first noting that the supremum in \eqref{e.lower_dim} can
be restricted to the rational numbers $r>0$
and then adapting
the proof of \cite[Theorem 8.14]{MR924157} with each $r$ separately.

\section{The proof of Theorem \ref{t.m.bounded}}\label{s.estimates}

Within this section we prove our main result, namely Theorem \ref{t.m.bounded} that is
stated in the Introduction.
Let us first  recall a convenient notation.
Namely, for $f\in L^p(G)$ we write
\[
S(f)(x,y) = S_{G,n,s,p}(f)(x,y)=\frac{ \chi_G(x)\chi_G(y) \lvert f(x)-f(y)\rvert}{\lvert x-y\rvert^{\frac{n}{p}+s}}
\]
for almost every $(x,y)\in \R^{2n}$.
The main tool for proving Theorem \ref{t.m.bounded} is a  pointwise inequality, stated in Proposition \ref{p.maximal}, which might be
of independent interest.

\begin{proposition}\label{p.maximal}
Let $\emptyset\not=G\subsetneq\R^n$ be an open set, $0<s<1$ and $1<p<\infty$. Then there exists a constant $C=C(n,p,s)>0$ such that,
for almost every $(x,y)\in\R^{2n}$, inequality
\begin{equation}\label{e.dom}
S(M_G  f)(x,y)
\le C\sum_{i,j,k,l\in \{0,1\}} \big( M_{ij}(M_{kl}(S f))(x,y) +M_{ij}(M_{kl}(S f))(y,x)\big)
\end{equation}
holds whenever  $f\in L^p(G)$ and $S f\in L^p(\R^{2n})$.
\end{proposition}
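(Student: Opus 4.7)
The plan is a pointwise case analysis at a common Lebesgue point $(x,y) \in G \times G$. Since the right-hand side of \eqref{e.dom} is symmetric under the swap $x \leftrightarrow y$, we may assume $M_Gf(x) \ge M_Gf(y)$; set $d_x = \dist(x,\partial G)$ and $d_y = \dist(y,\partial G)$. Given $\eta > 0$, select $r \in [0, d_x)$ with $\intav_{B(x,r)} \lvert f\rvert\,dz \ge M_Gf(x) - \eta$, using convention \eqref{e.zero} when $r = 0$. For any $r' \in [0, d_y)$, the identity
\begin{equation*}
\intav_{B(x,r)} \lvert f(z)\rvert\,dz - \intav_{B(y,r')} \lvert f(w)\rvert\,dw \le \intav_{B(x,r)} \intav_{B(y,r')} \lvert f(z) - f(w)\rvert\,dw\,dz,
\end{equation*}
together with the lower bound $\intav_{B(y,r')} \lvert f\rvert\,dw \le M_Gf(y)$, bounds $M_Gf(x) - M_Gf(y)$ up to $\eta$ by a double average. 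Substituting $\lvert f(z)-f(w)\rvert = Sf(z,w)\lvert z-w\rvert^{n/p+s}$ (valid since $z,w \in G$) and dividing by $\lvert x-y\rvert^{n/p+s}$ recasts the problem as estimating averages of $Sf$ with weight $(\lvert z-w\rvert/\lvert x-y\rvert)^{n/p+s}$ by compositions $M_{ij}(M_{kl}(Sf))$ evaluated at either $(x,y)$ or $(y,x)$.

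The clean case is $r \le d_y$: we take $r' = r$ and apply the matched change of variables $z = x+u$, $w = y+u$, which collapses the double integral into $\intav_{B(0,r)} \lvert f(x+u) - f(y+u)\rvert\,du$. Because $\lvert (x+u)-(y+u)\rvert = \lvert x-y\rvert$ identically and $x+u,\,y+u \in G$, this equals $\lvert x-y\rvert^{n/p+s}\,\intav_{B(0,r)} Sf(x+u, y+u)\,du \le \lvert x-y\rvert^{n/p+s}\,M_{11}(Sf)(x,y)$. The harder case is $r > d_y$: the geometric inequality $r \le d_x \le d_y + \lvert x-y\rvert$ forces $r - d_y < \lvert x-y\rvert$. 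Choosing $r' = d_y$ in the limit, the analogous mismatched identity $\intav_{B(0,1)} \lvert f(x+ru) - f(y+d_y u)\rvert\,du$ still enjoys the bound $\lvert (x+ru) - (y+d_y u)\rvert \le 2\lvert x-y\rvert$ for $\lvert u\rvert \le 1$. Reducing the resulting $\intav_{B(0,1)} Sf(x+ru, y+d_y u)\,du$ to $M_{ij}(M_{kl}(Sf))$-compositions requires a further subdivision according to the size of $r$ relative to $\lvert x-y\rvert$: if $r \lesssim \lvert x-y\rvert$, a direct double-average estimate yields $M_{10}(M_{01}(Sf))(x,y)$; if $r \gg \lvert x-y\rvert$ (forcing $d_y \gtrsim r$), one splits the $u$-domain at $\lvert u\rvert = d_y/r$ and applies the triangle inequality through $f(y+ru)$ on the inner region (valid since $y+ru \in G$ there) to produce an $M_{11}(Sf)(x,y)$-contribution plus an auxiliary near-diagonal average of $Sf$ near $y$, while the outer region is handled by the double-average estimate.

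The principal obstacle is the near-diagonal auxiliary average arising in the last subcase: the pair $(y+ru, y+d_y u)$ lies within $\lvert x-y\rvert$ of the diagonal of $\R^{2n}$ but is not swept out by any single $M_{ij}(M_{kl}(Sf))$-composition at $(x,y)$ or $(y,x)$. Resolving this requires dominating $Sf$ pointwise by $M_{01}(Sf)$ or $M_{10}(Sf)$ via Lebesgue differentiation, then invoking a second maximal operator to transfer control to the required base point; this is the ultimate source of the full family of sixteen compositions (and their swaps) appearing on the right-hand side of \eqref{e.dom}. Collecting the contributions from all cases and letting $\eta \to 0$ concludes the proof.
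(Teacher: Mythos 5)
Your overall strategy --- reduce to Lebesgue points, use symmetry to assume $M_Gf(x)\ge M_Gf(y)$, pick a (near-)optimal radius $r$ at $x$ and a competitor radius at $y$, and convert differences of averages into averages of $Sf$ dominated by compositions $M_{ij}(M_{kl}(Sf))$ --- is the same as the paper's, and your clean case $r\le d_y$ and the subcase $r\lesssim\lvert x-y\rvert$ are sound. The difference is the case split: the paper splits on $r\le\lvert x-y\rvert$ versus $r>\lvert x-y\rvert$ and, in the second case, takes the competitor radius $r_2=r-\lvert x-y\rvert$ together with the scaling $w=y+\tfrac{r_2}{r}z$; this keeps the mismatch $\lvert(x+z)-(y+\tfrac{r_2}{r}z)\rvert\le 2\lvert x-y\rvert$ uniform over the whole ball and allows the set average $\intav_{B(y+\frac{r_2}{r}z,\,2\lvert x-y\rvert)\cap G}f$ to be inserted once, so that both resulting terms land directly on $M_{11}(M_{01}(Sf))$ at $(x,y)$ and at $(y,x)$. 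Your split on $r$ versus $d_y$ instead produces the extra near-diagonal term, which the paper's organization never encounters.

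That near-diagonal term is where your argument has a genuine gap. The quantity to be controlled is an average of $Sf$ over the skewed $n$-dimensional disk $\{(y+ru,\,y+d_yu):\lvert u\rvert\le d_y/r\}\subset\R^{2n}$. The a.e.\ bound $Sf\le M_{01}(Sf)$ from Lebesgue differentiation holds at the \emph{same} point and so does not change this geometry: every composition $M_{ij}(M_{kl}(Sf))$ evaluated at $(x,y)$ or $(y,x)$ controls averages over products of balls (or translate families thereof), and none of these dominates an average over a skewed disk of $2n$-measure zero; nor is there a pointwise bound $Sf(z,w)\lesssim M_{01}(Sf)(z,w')$ for $w'\ne w$ that would let a "second maximal operator" transfer the base point. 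The correct repair is to thicken the disk \emph{before} passing to $Sf$: apply the triangle inequality through the set average $\intav_{B(y+d_yu,\,2\lvert x-y\rvert)\cap G}f$, legitimate here because $d_y\ge r-\lvert x-y\rvert\gg\lvert x-y\rvert$, so $B(y+d_yu,2\lvert x-y\rvert)\cap B(y,d_y)$ has measure $\gtrsim\lvert x-y\rvert^n$. This yields a genuine $2n$-dimensional average $\intav_{B(0,d_y)}\intav_{B(0,C\lvert x-y\rvert)}Sf(y+v,\,y+v+w)\,dw\,dv\le\intav_{B(0,C\lvert x-y\rvert)}M_{11}(Sf)(y,\,y+w)\,dw$, and the base point is then moved from $(y,y)$ to $(y,x)$ via the inclusion $B(y,C\lvert x-y\rvert)\subset B(x,(C+1)\lvert x-y\rvert)$ with comparable measures, giving $M_{01}(M_{11}(Sf))(y,x)$, which is admissible on the right-hand side of \eqref{e.dom}. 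With this replacement your plan closes; as written, the resolution of the obstacle you correctly identify does not go through.
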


By postponing the proof of Proposition \ref{p.maximal} for a while, we can prove  Theorem \ref{t.m.bounded}.

\begin{proof}[Proof of Theorem \ref{t.m.bounded}]
Fix $f\in L^p(G)$.
Without loss of generality, we may assume that the right hand side of inequality \eqref{e.max_bdd} is finite.
Hence $S f \in L^p(\R^{2n})$ and
inequality \eqref{e.max_bdd} is a consequence of Proposition \ref{p.maximal} and the boundedness
of maximal operators $M_{ij}$ on $L^p(\R^{2n})$.
\end{proof}

We proceed to the postponed proof that is motivated by  that of \cite[Theorem 3.2]{MR2579688}.

\begin{proof}[Proof of Proposition \ref{p.maximal}]
By replacing the function $f$ with $\lvert f\rvert$ we may assume that
 $f\ge 0$. Since $f\in L^p(G)$ and, hence, $M_G f\in L^p(G)$ we may restrict
 ourselves to points $(x,y)\in G\times G$ for which both 
 $x$ and $y$ are Lebesgue points of $f$ and both $M_Gf(x)$ and $M_Gf(y)$ are finite. Moreover,
by symmetry,  we may further assume that  $M_G f(x)>M_G f(y)$. 
These reductions allow us to
find $0\le r(x)\le \dist(x,\partial G)$ and $0\le r(y)\le \dist(y,\partial G)$ such that the estimate
\begin{align*}
S(M_G f)(x,y)&=\frac{\lvert M_G f(x)-M_G f(y)\rvert}{\lvert x-y\rvert^{\frac{n}{p}+s}}\\
&=\frac{\lvert \intav_{B(x,r(x))}f\,-\intav_{B(y,r(y))}f\,\rvert}{\lvert x-y\rvert^{\frac{n}{p}+s}}
\leq\frac{\lvert \intav_{B(x,r(x))}f\,-\intav_{B(y,r_2)}f\,\rvert}{\lvert x-y\rvert^{\frac{n}{p}+s}}
\end{align*}
is valid for any given number \[0\le r_2\le \dist(y,\partial G)\,;\] 
this number will
be chosen in a convenient manner in the two case studies below.

\smallskip

\noindent
{\bf{Case $r(x)\le \lvert x-y\rvert$}.}
Let us denote $r_1=r(x)$ and choose
\begin{equation}\label{e.case}
r_2=0\,.
\end{equation}
If $r_1=0$, then we get from \eqref{e.case} and our notational convention \eqref{e.zero} that \[S(M_G f)(x,y)\leq S(f)(x,y)\,.\]
Suppose then that $r_1>0$. Now
\begin{align*}
S(M_G f)(x,y)&\le \frac{1}{\lvert x-y\rvert^{\frac{n}{p}+s}}\bigg{|}\intav_{B(x,r_1)}f(z)\,dz\,-\intav_{B(y,r_2)}f(z)\,dz\,\bigg{|}  \\&=\frac{1}{\lvert x-y\rvert^{\frac{n}{p}+s}} \bigg{|}\intav_{B(x,r_1)} f(z)-f(y)\,dz\,\,\bigg{|}\\
& \lesssim \intav_{B(0,r_1)} \frac{\chi_G(x+z)\chi_G(y)\lvert f(x+z)-f(y)\rvert}{\lvert x+z-y\rvert^{\frac{n}{p}+s}}\,dz\le M_{10}(S f)(x,y)\,.
\end{align*}
We have shown that
\begin{align*}
S(M_G f)(x,y)
 \lesssim S (f)(x,y) + M_{10}(S f)(x,y)
\end{align*}
and it is clear that inequality \eqref{e.dom} follows (recall that $M_{00}$ is the identity
operator when restricted to non-negative functions).

\smallskip
\noindent
{\bf{Case $r(x)>\lvert x-y\rvert$}.}
Let us denote $r_1=r(x)>0$ and choose
\[
r_2 = r(x) - \lvert x-y\rvert>0\,.
\]
We then have
\begin{align*}
&\bigg{|}\intav_{B(x,r_1)}f(z)\,dz\,-\intav_{B(y,r_2)}f(z)\,dz\,\bigg{|}
=\bigg{|}\intav_{B(0,r_1)}f(x+z)-f(y+\frac{r_2}{r_1}z)\,dz\,\bigg{|}\\
&=\bigg{|}\intav_{B(0,r_1)}\bigg(f(x+z)-\intav_{B(y+\frac{r_2}{r_1}z,2\lvert x-y\rvert)\cap G}f(a)\,da\bigg)\\ &\qquad \qquad+\bigg(\intav_{B(y+\frac{r_2}{r_1}z,2\lvert x-y\rvert)\cap G}f(a)\,da
-f(y+\frac{r_2}{r_1}z)\bigg)\,dz\,\bigg{|}\\
&\leq A_1+A_2\,,
\end{align*}
where we have written
\begin{align*}
A_1 &= \intav_{B(0,r_1)}\bigg(\intav_{B(y+\frac{r_2}{r_1}z,2\lvert x-y\rvert)\cap G}|f(x+z)-f(a)|\,da\,\bigg)\,dz\,,\\
A_2&=\intav_{B(0,r_1)}\bigg(\intav_{B(y+\frac{r_2}{r_1}z,2\lvert x-y\rvert)\cap G}|f(y+\frac{r_2}{r_1}z)-f(a)|\,da\,\bigg)\,dz\,.
\end{align*}
We  estimate both of these terms separately,
but first we need certain auxiliary estimates.

\smallskip
 Recall that $r_2=r_1-\lvert x-y\rvert$.
Hence, for every $z\in B(0,r_1)$,
\begin{align*}
\lvert y+\frac{r_2}{r_1}z-(x+z)\rvert&=\lvert y-x+\frac{(r_2-r_1)}{r_1} z\rvert\\
&\leq \lvert y-x\rvert+\frac{\lvert x-y\rvert}{r_1}\lvert z\rvert\leq 2\lvert y-x\rvert\,.
\end{align*}
This, in turn, implies that 
\begin{equation}\label{e.inclusion}
B(y+\frac{r_2}{r_1}z,2\lvert x-y\rvert)\subset B(x+z,4\lvert x-y\rvert)
\end{equation}
whenever $z\in B(0,r_1)$.
Moreover, since $r_1> \lvert x-y\rvert$ and $\{y+\frac{r_2}{r_1}z, x+z\}\subset B(x,r_1)\subset G$ 
if $\lvert z\rvert< r_1$,
we obtain the two equivalences
\begin{equation}\label{e.equivalence}
\lvert B(y+\frac{r_2}{r_1}z,2\lvert x-y\rvert)\cap G\rvert \simeq \lvert x-y\rvert^n\simeq \lvert B(x+z,4\lvert x-y\rvert)\cap G\rvert
\end{equation}
for every $z\in B(0,r_1)$. Here the implied constants depend only on $n$.

\smallskip
\noindent
{\bf An estimate for $A_1$}.
The inclusion \eqref{e.inclusion} and inequalities \eqref{e.equivalence} show that, in the definition of $A_1$, we can replace the domain of integration in the  inner integral by $B(x+z,4\lvert x-y\rvert)\cap G$ and, at the same time,  control the error term while integrating on average. That is to say,     
\begin{align*}
A_1&\lesssim \intav_{B(0,r_1)}\bigg(\intav_{B(x+z,4\lvert x-y\rvert)\cap G}|f(x+z)-f(a)|\,da\,\bigg)\,dz\,.
\end{align*}
By observing that both $x+z$ and $a$ in the last double integral belong to  $G$ and using \eqref{e.equivalence} again, we can continue as follows:
\begin{align*}
\frac{A_1}{\lvert x-y\rvert^{\frac{n}{p}+s}}
&\lesssim
\intav_{B(0,r_1)}\bigg(\intav_{B(x+z,4\lvert x-y\rvert)}
\frac{ \chi_G(x+z)\chi_G(a)\lvert f(x+z)-f(a)\rvert}{\lvert x+z-a\rvert^{\frac{n}{p}+s}}\,da\bigg)dz\\
&\lesssim\intav_{B(0,r_1)}\bigg(\intav_{B(y+z,5\lvert x-y\rvert)}
S(f)(x+z,a)\,da\,\bigg)\,dz\,.
\end{align*}
Applying the maximal operators defined in Section \ref{s.notation} we find that
\begin{align*}
\frac{A_1}{\lvert x-y\rvert^{\frac{n}{p}+s}}&\lesssim \intav_{B(0,r_1)} M_{01}(Sf)(x+z,y+z)\,dz\leq M_{11}(M_{01}(Sf))(x,y)\,.
\end{align*}

\smallskip
\noindent
{\bf An estimate for $A_2$.} 
We use the inclusion $y+\frac{r_2}{r_1}z\in G$ for all  $z\in B(0,r_1)$ and then apply 
the first equivalence in \eqref{e.equivalence} to obtain
\begin{align*}
A_2 &= \intav_{B(0,r_1)}\bigg(\intav_{B(y+\frac{r_2}{r_1}z,2\lvert x-y\rvert)\cap G}
\chi_G(y+\frac{r_2}{r_1}z)\chi_G(a)\lvert f(y+\frac{r_2}{r_1}z)-f(a)\rvert \,da\,\bigg)\,dz\\
&\lesssim \intav_{B(0,r_1)}\bigg(\intav_{B(y+\frac{r_2}{r_1}z,2\lvert x-y\rvert)}
\chi_G(y+\frac{r_2}{r_1}z)\chi_G(a)\lvert f(y+\frac{r_2}{r_1}z)-f(a)\rvert \,da\,\bigg)\,dz\,.
\end{align*}
Hence, a change of variables yields
\begin{align*}
\frac{A_2}{\lvert x-y\rvert^{\frac{n}{p}+s}}
&\lesssim \intav_{B(0,r_2)}\bigg(\intav_{B(y+z,2\lvert x-y\rvert)}\frac{\chi_G(y+z)\chi_G(a)\lvert f(y+z)-f(a)\rvert}{\lvert y+z-a\rvert^{\frac{n}{p}+s}}\,d
a\,\bigg)\,dz\\
&\lesssim \intav_{B(0,r_2)}\bigg(\intav_{B(x+z,3\lvert x-y\rvert)}S(f)(y+z,a)\,da\,\bigg)\,dz\,.
\end{align*}
Applying operators $M_{01}$ and $M_{11}$ from Section \ref{s.notation}, we can proceed as follows
\begin{align*}
\frac{A_2}{\lvert x-y\rvert^{\frac{n}{p}+s}}&\lesssim\intav_{B(0,r_2)} M_{01}(Sf)(y+z,x+z)\,dz\leq M_{11}(M_{01}(Sf))(y,x)\,.
\end{align*}

\smallskip
Combining the above estimates for $A_1$ and $A_2$ we end up with  
\begin{align*}
S(M_Gf)(x,y)\le\frac{A_1+A_2}{\lvert x-y\rvert^{\frac{n}{p}+s}}  \lesssim M_{11}(M_{01}(S f))(x,y)+M_{11}(M_{01}(S f))(y,x)
\end{align*}
and inequality \eqref{e.dom} follows.
\end{proof}

\section{Application to fractional Hardy inequalities}\label{s.application}

We apply Theorem \ref{t.m.bounded} by
solving a certain localisation problem for $(s,p)$-Hardy inequalities and our result is formulated in Theorem \ref{t.second} below.
Recall that an open set $\emptyset\not=G\subsetneq\R^n$ admits an $(s,p)$-Hardy
inequality, for $0<s<1$ and $1<p<\infty$, if there is a
constant $C>0$ such that inequality 
\begin{equation}
\int_{G} \frac{\lvert f(x)\rvert^p}{\dist(x,\partial G)^{sp}}\,dx
\le   C \int_{G} \int_{G}
\frac{\lvert f(x)-f(y)\rvert ^p}{\lvert x-y\rvert ^{n+sp}}\,dy\,dx
\end{equation}
holds for
all functions $f\in C_c(G)$. 
We need a characterization of $(s,p)$-Hardy inequality in 
terms of
the following  $(s,p)$-capacities
of compact sets $K\subset G$\,; we write
\[
\mathrm{cap}_{s,p}(K,G) = \inf_u \lvert u\rvert_{W^{s,p}(G)}^p\,,
\]
where the infimum is taken over all real-valued functions $u\in C_c(G)$ such that $u(x)\ge 1$ for  every  $x\in K$.
The `Maz'ya-type characterization' stated in Theorem \ref{t.maz'ya} is \cite[Theorem 1.1]{Dyda3}
and it extends to the case $0<p<\infty$.
For information on characterizations of this type, we refer to \cite[Section 2]{MR817985} and \cite{MR2723821}.

\begin{theorem}\label{t.maz'ya}
Let $0<s<1$ and $1<p<\infty$. Then an  open set $\emptyset\not=G\subsetneq \R^n$ admits an $(s,p)$-Hardy inequality if and only if
there is a constant $C>0$ such that
\begin{equation}\label{e.Mazya}
\int_K \mathrm{dist}(x,\partial G)^{-sp}\,dx \le C\,\mathrm{cap}_{s,p}(K,G)
\end{equation}
for every compact set $K\subset G$.
\end{theorem}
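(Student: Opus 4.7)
The plan is to prove both implications via Cavalieri's principle together with a strong capacitary inequality for the fractional seminorm $\lvert\cdot\rvert_{W^{s,p}(G)}$.

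Necessity is essentially immediate. Suppose \eqref{e.hardy} holds with some constant $C_H$, and fix a compact set $K\subset G$. For every admissible test function $u\in C_c(G)$ with $u\geq 1$ on $K$ we have $\lvert u(x)\rvert^p\geq 1$ on $K$, so \eqref{e.hardy} yields
\[
\int_K \dist(x,\partial G)^{-sp}\,dx \leq \int_G \frac{\lvert u(x)\rvert^p}{\dist(x,\partial G)^{sp}}\,dx \leq C_H\lvert u\rvert_{W^{s,p}(G)}^p.
\]
Taking the infimum over admissible $u$ produces \eqref{e.Mazya} with $C=C_H$.

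For the converse, fix $f\in C_c(G)$. Cavalieri's principle gives
\[
\int_G \frac{\lvert f(x)\rvert^p}{\dist(x,\partial G)^{sp}}\,dx = p\int_0^\infty t^{p-1}\int_{\{\lvert f\rvert\geq t\}}\dist(x,\partial G)^{-sp}\,dx\,dt.
\]
Since $f\in C_c(G)$ and $t>0$, each superlevel set $\{\lvert f\rvert\geq t\}$ is a compact subset of $G$, so \eqref{e.Mazya} applied slicewise in $t$ bounds the right-hand side by $Cp\int_0^\infty t^{p-1}\,\mathrm{cap}_{s,p}(\{\lvert f\rvert\geq t\},G)\,dt$. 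Sufficiency therefore reduces to the strong capacitary inequality $\int_0^\infty t^{p-1}\,\mathrm{cap}_{s,p}(\{\lvert f\rvert\geq t\},G)\,dt\lesssim \lvert f\rvert_{W^{s,p}(G)}^p$.

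I would handle this remaining inequality by dyadic discretization: by monotonicity of $\mathrm{cap}_{s,p}(\cdot,G)$ in the set, the left-hand side is comparable to $\sum_{k\in\Z}2^{kp}\,\mathrm{cap}_{s,p}(\{\lvert f\rvert\geq 2^k\},G)$. Using the Maz'ya-type truncations $T_k(t)=\min(\max(t-2^k,0),2^k)$, the rescaled function $2^{-k}T_k(\lvert f\rvert)$ lies in $C_c(G)$ and is at least $1$ on $\{\lvert f\rvert\geq 2^{k+1}\}$; inserting it into the definition of capacity and cancelling $2^{kp}$ against $2^{-kp}$ reduces everything to the pointwise truncation inequality
\[
\sum_{k\in\Z}\lvert T_k(a)-T_k(b)\rvert^p \leq C_p\lvert a-b\rvert^p,\qquad a,b\geq 0,
\]
which is plugged into the double integral defining $\lvert\cdot\rvert_{W^{s,p}(G)}^p$ via $\bigl\lvert\lvert f(x)\rvert-\lvert f(y)\rvert\bigr\rvert\leq\lvert f(x)-f(y)\rvert$. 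The main technical obstacle is verifying the truncation inequality itself: only the $k$ with $[b,a]\cap[2^k,2^{k+1}]\neq\emptyset$ contribute, and I would split into the regime $a\leq 2b$ (only $O(1)$ dyadic layers contribute, so the $1$-Lipschitz bound $\lvert T_k(a)-T_k(b)\rvert\leq\lvert a-b\rvert$ suffices) and the regime $a>2b$ (use the uniform bound $\lvert T_k(a)-T_k(b)\rvert\leq 2^k$ on the layers with $2^k\leq\lvert a-b\rvert$, summed geometrically, together with the Lipschitz bound on the $O(1)$ remaining layers, noting that $a<2\lvert a-b\rvert$ in this regime).
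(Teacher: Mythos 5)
Your argument is correct: the necessity direction is the immediate one you give, and the sufficiency direction via Cavalieri's principle plus the strong capacitary inequality proved by Maz'ya-type dyadic truncations (with the pointwise bound $\sum_k\lvert T_k(a)-T_k(b)\rvert^p\lesssim\lvert a-b\rvert^p$) goes through as sketched, the only cosmetic point being an index shift since $2^{-k}T_k(\lvert f\rvert)$ tests the level set $\{\lvert f\rvert\ge 2^{k+1}\}$. The paper itself does not prove Theorem \ref{t.maz'ya} but quotes it as \cite[Theorem 1.1]{Dyda3}, and your proof is the standard truncation argument used there, so this is essentially the same approach as the (cited) original.
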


We solve a `localisation problem of the testing condition \eqref{e.Mazya}', which is stated as a question in \cite[p. 2]{Dyda3}. Roughly speaking, we prove that
if $\mathrm{cap}_{s,p}(\cdot,G)$ satisfies a quasiadditivity property, see Definition \ref{d.quasi},
then $G$ admits an $(s,p)$-Hardy inequality if and only if
inequality \eqref{e.Mazya} holds for all Whitney cubes $K=Q\in\mathcal{W}(G)$.

\begin{definition}\label{d.quasi}
The $(s,p)$-capacity $\mathrm{cap}_{s,p}(\cdot,G)$ is weakly $\mathcal{W}(G)$-quasiadditive, if
there exists a constant $N>0$ such that 
\begin{equation}\label{e.quasi}
\sum_{Q\in\mathcal{W}(G)} \mathrm{cap}_{s,p}(K\cap Q,G) \le N\,\mathrm{cap}_{s,p}(K, G)
\end{equation}
whenever $K=\bigcup_{Q\in\mathcal{E}}Q$ and $\mathcal{E}\subset\mathcal{W}(G)$ is a finite family
of Whitney cubes.
\end{definition}

More precisely, we prove the following characterization.

\begin{theorem}\label{t.second}
Let $0<s<1$ and $1<p<\infty$ be such that $sp<n$.
Suppose that $G\not=\emptyset$ is a bounded open set in $\R^n$. Then  the following conditions (A) and (B) are equivalent.
\begin{itemize} 
\item[(A)] $G$ admits an $(s,p)$-Hardy inequality;
\item[(B)]  $\mathrm{cap}_{s,p}(\cdot,G)$ is weakly $\mathcal{W}(G)$-quasiadditive  and there exists a constant $c>0$ such that
\begin{equation}\label{e.testing}
\ell(Q)^{n-sp}\le c\,\mathrm{cap}_{s,p}(Q,G)
\end{equation} for every  $Q\in\mathcal{W}(G)$.
\end{itemize}
\end{theorem}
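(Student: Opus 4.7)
My plan is to treat the two implications separately. The direction (A) $\Rightarrow$ (B) is a relatively direct consequence of Theorem \ref{t.maz'ya}, while (B) $\Rightarrow$ (A) combines the Harnack-type bound \eqref{e.harnack}, Theorem \ref{t.m.bounded}, and a dyadic decomposition over the level sets of $M_Gf$.

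For (A) $\Rightarrow$ (B), Theorem \ref{t.maz'ya} supplies the testing inequality $\int_K \dist(x,\partial G)^{-sp}\,dx \lesssim \mathrm{cap}_{s,p}(K,G)$ for every compact $K \subset G$. Taking $K = Q$ for $Q \in \mathcal{W}(G)$ and invoking $\dist(\cdot,\partial G)\simeq\ell(Q)$ on $Q$ (from \eqref{dist_est}) yields \eqref{e.testing}. For quasiadditivity, I would construct a standard cutoff $\eta_Q\in C_c(G)$ with $\eta_Q=1$ on $Q$, supported in a small dilation of $Q$ (possible since $\dist(Q,\partial G)\ge\mathrm{diam}(Q)$), and Lipschitz with constant $\ell(Q)^{-1}$. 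A direct calculation shows $\mathrm{cap}_{s,p}(Q,G)\le|\eta_Q|_{W^{s,p}(G)}^p\lesssim\ell(Q)^{n-sp}$ (where $sp<n$ is used). Combining this upper bound with the Maz'ya inequality applied to $K=\bigcup_{Q\in\mathcal{E}}Q$ gives
\[
\sum_{Q\in\mathcal{E}}\mathrm{cap}_{s,p}(Q,G)\lesssim\sum_{Q\in\mathcal{E}}\ell(Q)^{n-sp}\simeq\int_K\dist(x,\partial G)^{-sp}\,dx\lesssim\mathrm{cap}_{s,p}(K,G),
\]
which is \eqref{e.quasi}.

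For (B) $\Rightarrow$ (A), fix $f\in C_c(G)$ with $f\ge 0$. Decomposing $G$ into Whitney cubes, the Hardy inequality reduces to $\sum_{Q\in\mathcal{W}(G)}\ell(Q)^{-sp}\int_Q f^p\,dx\lesssim|f|_{W^{s,p}(G)}^p$. Writing $f=(f-f_Q)+f_Q$ with $f_Q=\intav_Q f$, applying a fractional Poincar\'e inequality on each cube, and summing over the pairwise disjoint cubes, the task is further reduced to bounding $\sum_Q\ell(Q)^{n-sp}f_Q^p$ by $|f|_{W^{s,p}(G)}^p$. Here I would first use a minor variant of \eqref{e.harnack} to obtain $f_Q\lesssim\inf_Q M_Gf$. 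Setting $U_j=\{M_Gf>2^j\}$ for $j\in\Z$, each $U_j$ is open by Lemma \ref{l.continuity}, and since $G$ is bounded and $f\in C_c(G)$ force $M_Gf$ to have compact support in $G$, each $\overline{U_j}$ is a compact subset of $G$. Grouping Whitney cubes into the dyadic classes $\{Q:2^j\le f_Q<2^{j+1}\}$, each such $Q$ is contained in some $\overline{U_{j-c_0}}$ for a fixed absolute constant $c_0$. Combining \eqref{e.testing} with the quasiadditivity \eqref{e.quasi}, extended from finite to countable families via monotone limits, yields
\[
\sum_Q\ell(Q)^{n-sp}f_Q^p\lesssim\sum_j 2^{jp}\,\mathrm{cap}_{s,p}(\overline{U_j},G).
\]

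To bound the right-hand side by $|f|_{W^{s,p}(G)}^p$ I would use the dyadic truncations $u_j=\min((M_Gf-2^{j-1})_+,2^{j-1})\in C_c(G)$, which after division by $2^{j-1}$ are admissible test functions for $\mathrm{cap}_{s,p}(\overline{U_j},G)$. A case analysis on the dyadic locations of $M_Gf(x)$ and $M_Gf(y)$ establishes the pointwise bound $\sum_j|u_j(x)-u_j(y)|^p\lesssim|M_Gf(x)-M_Gf(y)|^p$. Integrating against $|x-y|^{-n-sp}$ over $G\times G$ and invoking Theorem \ref{t.m.bounded} then gives
\[
\sum_j 2^{jp}\,\mathrm{cap}_{s,p}(\overline{U_j},G)\lesssim\sum_j|u_j|_{W^{s,p}(G)}^p\lesssim|M_Gf|_{W^{s,p}(G)}^p\lesssim|f|_{W^{s,p}(G)}^p,
\]
closing the chain. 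The main technical obstacle I expect is the careful pointwise case analysis justifying $\sum_j|u_j(x)-u_j(y)|^p\lesssim|M_Gf(x)-M_Gf(y)|^p$; a secondary point is extending \eqref{e.quasi} from finite to countable families of Whitney cubes, which should follow by passing to finite subfamilies and invoking monotonicity of the capacity.
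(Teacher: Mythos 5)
Your argument is correct, but the implication (B)~$\Rightarrow$~(A) follows a genuinely different route from the paper's. The paper first invokes Theorem \ref{t.maz'ya} to reduce matters to the Maz'ya testing condition \eqref{e.Mazya} for an arbitrary compact $K\subset G$ and an arbitrary admissible $u$, and then splits $\mathcal{W}(G)$ according to whether $\intav_{2^{-1}Q}u<1/2$ or not: on the ``small-mean'' cubes a fractional Poincar\'e/Jensen argument applies because $u\ge 1$ on $K\cap Q$, while on the ``large-mean'' cubes the bound \eqref{e.harnack} shows that a fixed multiple of $M_Gu$ is admissible for the capacity of their (finite) union, after which \eqref{e.testing}, quasiadditivity and Theorem \ref{t.m.bounded} finish the job. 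You instead bypass Theorem \ref{t.maz'ya} entirely and prove the Hardy inequality directly for $f\in C_c(G)$ via a Poincar\'e decomposition on Whitney cubes plus a capacitary strong-type estimate over the level sets $U_j=\{M_Gf>2^j\}$, tested with the dyadic truncations $u_j$ of $M_Gf$. Both proofs ultimately rest on the same three ingredients (the Harnack-type bound $\intav_Qf\lesssim\inf_QM_Gf$, continuity and compact support of $M_Gf$, and Theorem \ref{t.m.bounded}), and both only ever apply quasiadditivity to finite families (each of your classes $\{Q:2^j\le f_Q<2^{j+1}\}$ is finite since $f\in C_c(G)$, so the ``countable extension'' you worry about is not needed). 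The paper's route is shorter because Theorem \ref{t.maz'ya} absorbs the truncation/level-set work; yours is self-contained modulo that theorem and makes the capacitary strong-type mechanism explicit. Incidentally, your ``main technical obstacle'' is immediate: since the intervals $(2^{j-1},2^j]$ partition $(0,\infty)$ and $|u_j(x)-u_j(y)|$ is exactly the length of $(M_Gf(y),M_Gf(x))\cap(2^{j-1},2^j)$, one has $\sum_j|u_j(x)-u_j(y)|=|M_Gf(x)-M_Gf(y)|$, and the $\ell^1\hookrightarrow\ell^p$ inequality gives the pointwise bound with constant $1$.

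Two minor points on (A)~$\Rightarrow$~(B), which the paper simply quotes from the reference \cite{Dyda3}. First, in Definition \ref{d.quasi} the sum runs over \emph{all} $Q\in\mathcal{W}(G)$, not only over $Q\in\mathcal{E}$; for $Q\notin\mathcal{E}$ the set $K\cap Q$ is a union of faces shared with neighbouring cubes of $\mathcal{E}$, which can carry positive $(s,p)$-capacity when $sp>1$. This is repaired by subadditivity of the capacity, the bound $\mathrm{cap}_{s,p}(K\cap Q,G)\lesssim\sum_{Q'}\ell(Q')^{n-sp}$ over the boundedly many neighbours $Q'\in\mathcal{E}$ of $Q$, and the comparability of side lengths of adjacent Whitney cubes; you should say so explicitly. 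Second, your cutoff computation $\mathrm{cap}_{s,p}(Q,G)\lesssim\ell(Q)^{n-sp}$ is fine (and in fact does not need $sp<n$, only $s<1$), so the rest of that direction goes through as you describe.
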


Before the proof of Theorem \ref{t.second}, let us make a remark concerning condition (B).

\begin{remark}
The counterexamples  in \cite[Section 6]{Dyda3} show that neither one of the two conditions
(i.e., weak $\mathcal{W}(G)$-quasiadditivity of the capacity and the lower bound \eqref{e.testing} 
for the capacities of Whitney cubes) appearing in Theorem \ref{t.second}(B) 
is implied by the other one. Accordingly, both of these conditions are needed
for the characterization. 
\end{remark}

\begin{proof}[Proof of Theorem \ref{t.second}]
The implication from (A) to (B) follows from \cite[Proposition 4.1]{Dyda3} in combination with \cite[Lemma 2.1]{Dyda3}.
In the following proof of the implication from (B) to (A) we adapt the argument given in \cite[Proposition 5.1]{Dyda3}.

By Theorem \ref{t.maz'ya}, it suffices to show that
\begin{equation}\label{e.wish}
\int_K \mathrm{dist}(x,\partial G)^{-sp}\, dx \lesssim \mathrm{cap}_{s,p}(K,G)\,,
\end{equation}
whenever $K\subset G$ is compact.
Let us fix a compact set $K\subset G$ and an admissible test function $u$ for $\mathrm{cap}_{s,p}(K,G)$. We partition $\mathcal{W}(G)$ as $\mathcal{W}_1\cup \mathcal{W}_2$, where 
\begin{align*}
\mathcal{W}_1 = \{Q\in\mathcal{W}(G)\,:\, \langle u\rangle_{2^{-1}Q} :=\intav_{2^{-1}Q} u < 1/2\}\,,\qquad
\mathcal{W}_2 = 
\mathcal{W}(G)\setminus \mathcal{W}_1\,.
\end{align*}
Write the left-hand side of \eqref{e.wish} as
\begin{equation}\label{e.w_split}
 \bigg\{\sum_{Q\in\mathcal{W}_1} + \sum_{Q\in\mathcal{W}_2}\bigg\}
\int_{K\cap Q} \mathrm{dist}(x,\partial G)^{-sp}\,dx\,.
\end{equation}
To estimate the first series we observe that, for every $Q\in\mathcal{W}_1$ and every $x\in K\cap Q$,
\[
\tfrac 12=1-\tfrac12 <  u(x) -  \langle u\rangle_{2^{-1}Q}  = \lvert u(x)-\langle u\rangle_{2^{-1}Q}\rvert\,.
\]
Thus, by Jensen's  inequality and \eqref{dist_est},
\begin{align*}
\sum_{Q\in\mathcal{W}_1} 
\int_{K\cap Q} \mathrm{dist}(x,\partial G)^{-sp}\,dx
&\lesssim \sum_{Q\in\mathcal{W}_1} 
\ell(Q)^{-sp} \int_{Q} \lvert u(x)-\langle u\rangle_{2^{-1}Q}\rvert^p\,dx\\
&\lesssim \sum_{Q\in\mathcal{W}_1} 
\ell(Q)^{-n-sp} \int_{Q}\int_{Q} \lvert u(x)-u(y)\rvert^p\,dy\,dx\\
&\lesssim \sum_{Q\in\mathcal{W}_1} 
\int_{Q}\int_{Q} \frac{\lvert u(x)-u(y)\rvert^p}{\lvert x-y\rvert^{n+sp}}\,dy\,dx
\\&\lesssim \lvert u\rvert_{W^{s,p}(G)}^p\,.
\end{align*}

Let us then focus on the remaining series in \eqref{e.w_split}.
Let us consider $Q\in\mathcal{W}_2$ and $x\in Q$.
Observe that $2^{-1}Q\subset B(x,\tfrac 45\mathrm{diam}(Q))$. Hence, by inequalities \eqref{dist_est}, 
\begin{equation}\label{e.m_est}
\begin{split}
M_G u(x) 
\gtrsim \intav_{2^{-1}Q} u(y)\,dy \geq \tfrac 12\,.
\end{split}
\end{equation}
The support of $M_G u$ is a compact set in $G$ by the boundedness of $G$
and the fact that $u\in C_c(G)$. 
By Lemma \ref{l.continuity}, we find that $M_G u$ is continuous. Concluding from these remarks we find that 
there is  $\rho>0$, depending only on $n$, such that $\rho M_G u$ is an admissible test function for 
 $\mathrm{cap}_{s,p}(\cup_{Q\in\mathcal{W}_2} Q,G)$.
The family $\mathcal{W}_2$ is finite, as $u\in C_c(G)$.
Hence, by condition (B) and the inequality \eqref{e.m_est},
\begin{align*}
\sum_{Q\in\mathcal{W}_2} 
\int_{K\cap Q} \mathrm{dist}(x,\partial G)^{-sp}\,dx 
&\lesssim \sum_{Q\in\mathcal{W}_2} 
\ell(Q)^{n-sp}\\
&\le c \sum_{Q\in\mathcal{W}_2} \mathrm{cap}_{s,p}(Q,G) \\
&\le cN \mathrm{cap}_{s,p}\Big(\bigcup_{Q\in\mathcal{W}_2} Q,G\Big)\\
&\le cN\rho^p \int_G \int_G \frac{\lvert M_G u(x)-M_G u(y)\rvert^p}{\lvert x-y\rvert^{n+sp}}\,dy\,dx\,.
\end{align*}
By Theorem \ref{t.m.bounded},  the last term is dominated by 
\[
C(n,s,p,N,c,\rho)\lvert u\rvert_{W^{s,p}(G)}^p\,.
\] 
The desired inequality \eqref{e.wish} follows from the considerations above.
\end{proof}

\def\cprime{$'$} \def\cprime{$'$}


\begin{thebibliography}{10}

\bibitem{MR2944369}
E.~Di~Nezza, G.~Palatucci, and E.~Valdinoci.
\newblock Hitchhiker's guide to the fractional {S}obolev spaces.
\newblock {\em Bull. Sci. Math.}, 136(5):521--573, 2012.

\bibitem{Dyda3}
B.~Dyda and A.~V. V{\"a}h{\"a}kangas.
\newblock Characterizations for fractional {H}ardy inequality.
\newblock {\em Adv. Calc. Var.}, to appear.

\bibitem{Dyda2}
B.~Dyda and A.~V. V{\"a}h{\"a}kangas.
\newblock A framework for fractional {H}ardy inequalities.
\newblock {\em Ann. Acad. Sci. Fenn. Math.}, to appear.

\bibitem{E-HSV}
D.~Edmunds, R.~Hurri-Syrj\"anen, and A.~V. V{\"a}h{\"a}kangas.
\newblock Fractional {H}ardy-type inequalities in domains with uniformly fat
  complement.
\newblock {\em Proc. Amer. Math. Soc.}, 142(3):897--907, 2014.

\bibitem{MR2041705}
P.~Haj{\l}asz and J.~Onninen.
\newblock On boundedness of maximal functions in {S}obolev spaces.
\newblock {\em Ann. Acad. Sci. Fenn. Math.}, 29(1):167--176, 2004.

\bibitem{ihnatsyeva3}
L.~Ihnatsyeva, J.~Lehrb\"ack, H.~Tuominen, and A.~V. V{\"a}h{\"a}kangas.
\newblock {F}ractional {H}ardy inequalities and visibility of the boundary.
\newblock arXiv: 1305.4616.

\bibitem{ihnatsyeva2}
L.~Ihnatsyeva and A.~V. V{\"a}h{\"a}kangas.
\newblock Hardy inequalities in {T}riebel--{L}izorkin spaces {II}. {A}ikawa
  dimension.
\newblock {\em Ann. Mat. Pura Appl.} (4), 2013, (DOI)
  10.1007/s10231-013-0385-z.

\bibitem{ihnatsyeva1}
L.~Ihnatsyeva and A.~V. V{\"a}h{\"a}kangas.
\newblock Hardy inequalities in {T}riebel-{L}izorkin spaces.
\newblock {\em Indiana Univ. Math. J.}, to appear.

\bibitem{MR1469106}
J.~Kinnunen.
\newblock The {H}ardy-{L}ittlewood maximal function of a {S}obolev function.
\newblock {\em Israel J. Math.}, 100:117--124, 1997.

\bibitem{MR2723821}
J.~Kinnunen and R.~Korte.
\newblock Characterizations for the {H}ardy inequality.
\newblock In {\em Around the research of {V}ladimir {M}az'ya. {I}}, volume~11
  of {\em Int. Math. Ser. (N. Y.)}, pages 239--254. Springer, New York, 2010.

\bibitem{MR1650343}
J.~Kinnunen and P.~Lindqvist.
\newblock The derivative of the maximal function.
\newblock {\em J. Reine Angew. Math.}, 503:161--167, 1998.

\bibitem{MR1979008}
J.~Kinnunen and E.~Saksman.
\newblock Regularity of the fractional maximal function.
\newblock {\em Bull. London Math. Soc.}, 35(4):529--535, 2003.

\bibitem{MR1951818}
S.~Korry.
\newblock Boundedness of {H}ardy-{L}ittlewood maximal operator in the framework
  of {L}izorkin-{T}riebel spaces.
\newblock {\em Rev. Mat. Complut.}, 15(2):401--416, 2002.

\bibitem{MR3189220}
J.~Lehrb{\"a}ck and N.~Shanmugalingam.
\newblock Quasiadditivity of {V}ariational {C}apacity.
\newblock {\em Potential Anal.}, 40(3):247--265, 2014.

\bibitem{MR2280193}
H.~Luiro.
\newblock Continuity of the maximal operator in {S}obolev spaces.
\newblock {\em Proc. Amer. Math. Soc.}, 135(1):243--251 (electronic), 2007.

\bibitem{MR2579688}
H.~Luiro.
\newblock On the regularity of the {H}ardy-{L}ittlewood maximal operator on
  subdomains of {$\Bbb R\sp n$}.
\newblock {\em Proc. Edinb. Math. Soc. (2)}, 53(1):211--237, 2010.

\bibitem{MR817985}
V.~G. Maz'ya.
\newblock {\em Sobolev spaces}.
\newblock Springer Series in Soviet Mathematics. Springer-Verlag, Berlin, 1985.
\newblock Translated from the Russian by T. O. Shaposhnikova.

\bibitem{MR924157}
W.~Rudin.
\newblock {\em Real and complex analysis}.
\newblock McGraw-Hill Book Co., New York, third edition, 1987.

\bibitem{MR0290095}
E.~M. Stein.
\newblock {\em Singular integrals and differentiability properties of
  functions}.
\newblock Princeton Mathematical Series, No. 30. Princeton University Press,
  Princeton, N.J., 1970.

\bibitem{MR1163193}
H.~Triebel.
\newblock {\em Theory of function spaces. {II}}, volume~84 of {\em Monographs
  in Mathematics}.
\newblock Birkh\"auser Verlag, Basel, 1992.

\bibitem{Z}
Y.~Zhou.
\newblock Fractional {S}obolev extension and imbedding.
\newblock {\em Trans. Amer. Math. Soc.}, to appear.

\end{thebibliography}
\end{document}